\numberwithin{equation}{section}
\newcommand{\beq}{\begin{equation}}
\newcommand{\eeq}{\end{equation}}
\newcommand{\RR}{\ensuremath{\mathbb{R}}}
\newcommand{\tr}{\ensuremath{\top}}
\newcommand{\bpm}{\begin{pmatrix}}
\newcommand{\epm}{\end{pmatrix}}
\newcommand{\dif}{\ensuremath{\mathrm{d}}}
\newcommand{\mi}{\ensuremath{\mathrm{i}}}
\newcommand{\f}{\ensuremath{\mathsf{f}}}
\newcommand{\bfa}{\ensuremath{\mathbf{a}}}
\DeclareMathOperator{\re}{Re}
\renewcommand{\Re}{\re}
\newcommand{\lam}{\ensuremath{\lambda}}
\newcommand{\mat}[1]{\ensuremath{\mathbf{#1}}}
\newcommand\R{\mathbb R}
\newcommand\br{\begin{remark}}
\newcommand\er{\end{remark}}
\newcommand\bp{\begin{pmatrix}}
\newcommand\ep{\end{pmatrix}}
\newcommand{\be}{\begin{equation}}
\newcommand{\ee}{\end{equation}}
\newcommand\ba{\begin{equation}\begin{aligned}}
\newcommand\ea{\end{aligned}\end{equation}}
\newcommand{\bap}{\begin{app}}
\newcommand{\eap}{\end{app}}
\newcommand{\begs}{\begin{exams}}
\newcommand{\eegs}{\end{exams}}
\newcommand{\beg}{\begin{example}}
\newcommand{\eeg}{\end{exaplem}}
\newcommand{\bpr}{\begin{proposition}}
\newcommand{\epr}{\end{proposition}}
\newcommand{\bt}{\begin{theorem}}
\newcommand{\et}{\end{theorem}}
\newcommand{\bc}{\begin{corollary}}
\newcommand{\ec}{\end{corollary}}
\newcommand{\bl}{\begin{lemma}}
\newcommand{\el}{\end{lemma}}
\newcommand{\bd}{\begin{definition}}
\newcommand{\ed}{\end{definition}}
\newcommand{\brs}{\begin{remarks}}
\newcommand{\ers}{\end{remarks}}
\newcommand{\diag}{{\rm diag }}
\newtheorem{theorem}{Theorem}[section]
\newtheorem{proposition}[theorem]{Proposition}
\newtheorem{corollary}[theorem]{Corollary}
\newtheorem{lemma}[theorem]{Lemma}
\theoremstyle{remark}
\newtheorem{remark}{Remark}
\theoremstyle{definition}
\newtheorem{definition}[theorem]{Definition}
\newtheorem{example}[theorem]{Example}
\newcommand{\ti}{\tilde}
\newcommand{\spm}{{\ensuremath{{\scriptscriptstyle\pm}}}}
\renewcommand{\sp}{{\ensuremath{{\scriptscriptstyle +}}}}
\newcommand{\sm}{{\ensuremath{{\scriptscriptstyle -}}}}
\title{Balanced flux formulations for multidimensional Evans function computations for viscous shocks}
\author{Blake Barker}
\address{Department of Mathematics, Brigham Young University, Provo, UT 84603}
\email{bhbarker@indiana.edu}
\thanks{B.B. was partially supported by NSF grant DMS-0801745}
\author{Jeffrey Humpherys}
\address{Department of Mathematics, Brigham Young University, Provo, UT 84603}
\email{jeffh@math.byu.edu}
\thanks{J.H. was partially supported by NSF grant DMS-0847074 (CAREER)}
\author{Gregory Lyng}
\address{Department of Mathematics, University of Wyoming, Laramie, WY 82071}
\email{glyng@uwyo.edu}
\thanks{G.L. was partially supported by NSF grants DMS-0845127 (CAREER) and DMS-1413273}
\author{Kevin Zumbrun}
\address{Department of Mathematics, Indiana University, Bloomington, IN 47405}
\email{kzumbrun@indiana.edu}
\thanks{K.Z. was partially supported by NSF grant DMS-0801745}
\date{Last Updated:  \today}
\begin{document}
\begin{abstract}
The Evans function is a powerful tool for the stability analysis of viscous shock profiles; zeros of this function carry stability information. In the one-dimensional case, it is typical to compute the Evans function using Goodman's integrated coordinates \cite{G1}; this device facilitates the search for zeros of the Evans function by winding number arguments.    
Although integrated coordinates are not available in the multidimensional case, we show here that there is a choice of coordinates which gives similar advantages.
\end{abstract}
\maketitle
\tableofcontents

\section{Introduction}\label{sec:intro}
\subsection{Overview}\label{ssec:over}
The Evans function has proven to be a potent theoretical and numerical tool for the stability analysis of viscous shock profiles; see, e.g.,   \cite{HLyZ,Z1}.  In the multidimensional setting, the Evans function $D$ is a function of frequencies $(\lam,\xi)\in\{\Re\lam\geq0\}\times\RR^{d-1}$ where the complex spectral parameter $\lam$ is dual to time and the vector $\xi$ is dual to the transverse spatial directions. Zeros of $D$ with $\Re\lam>0$ correspond to perturbations that grow exponentially in time. A central task in the stability analysis of viscous shock profiles is therefore the determination of the number and location of zeros (if any) of $D$ in the unstable half space. Indeed, generalized spectral stability---roughly, the absence of such zeros---is a sufficient condition for nonlinear asymptotic stability with explicit algebraic-in-time rates of decay in $L^p$, $p\geq 2$; see \cite{Z1}. The precise statement of generalized spectral stability is formulated in terms of the Evans function itself. 
For important physical problems, e.g., gas dynamics or magnetohydrodynamics, locating zeros of the Evans function and verifying this Evans-function condition is a task that requires the numerical approximation of $D$. 

We discuss here one important practical aspect of computing Evans functions associated with viscous shock profiles for multidimensional systems of conservation laws with physically appropriate ``real'' or partially parabolic viscosity. In particular, we develop the basic properties of various formulations of the Evans function based on particualr choices of the phase variables in the first-order formulation of the associated eigenvalue problem. 
We call these the flux, the balanced flux, and the modified balanced flux formulations, and we show that these formulations have concrete benefits for the numerical computation of $D$. The flux coordinates have their origins in the work of Goodman \cite{G1,G2}, and the balanced flux formulation was originally introduced by Plaza \& Zumbrun \cite{PZ} for the purpose of analyzing the spectral stability of small-amplitude multidimensional relaxation shocks. Here, we propose a further modification of these coordinates that preserves the desirable property of analyticity with respect the complex eigenvalue parameter. 

To put these developments in context, we recall that in one space dimension it is a standard practice to use Goodman's tactic of integrated coordinates \cite{G1}. Importantly, this maneuver removes the translational eigenvalue at the origin and is advantageous both for energy estimates \cite{HZ} and for computation of the Evans function \cite{BHRZ,HLyZ}. 
We show that our balanced flux coordinates and modified balanced flux coordinates also have this desirable property. Indeed, we give  a new, transparent proof of this fact which recovers Zumbrun \& Serre's fundamental link \cite{ZS} between low-frequency behavior of the (viscous) Evans function and the (inviscid) Lopatinski\u\i\ determinant (see Proposition \ref{lem:zs} below). 
Moreover, unlike the balanced flux formulation, our modified version retains analyticity with respect to $\lambda$ while still reducing to the usual integrated Evans function when $\xi=0$ (the one-dimensional case). For each constant $\xi$-slice of frequency space, this is perhaps the truest generalization of the integrated Evans function to the multidimensional setting. However, some of the radial uniformity is lost. 

Both of our balanced flux formulations accommodate multidimensional systems with real viscosity and are therefore applicable to physical systems such as gas dynamics or magnetohydrodynamics. Indeed, one significant benefit of the flux framework presented here is that it provides a systematic choice of ``good'' coordinates for Evans-function computations for the stability of viscous shock profiles. We note that, for example, the coordinates used for the Evans-function computations for one-dimensional gas dynamics in \cite{HLyZ} were created on an ad hoc basis. That is, though they were based on Goodman's integrated coordinates, the actual construction relied heavily on the precise form of the equations of compressible gas dynamics. Here, under minimal hypotheses, we show---for a broad class of equations in one and several space dimensions---that there is a choice of coordinates that accommodates real viscosity, maintains analyticity with respect to the spectral paramter $\lambda$, and removes the translational eigenvalue at the origin.  These features are all important for the practical treatment of shock stability by numerical computation of the Evans function. 
Finally, the utility of our proposed formulation is demonstrated by numerical computations of the Evans function for multi-dimensions 
in the fundamental settings of isentropic \cite{BHLZ} and nonisentropic \cite{HLyZ2} gas dynamics and MHD \cite{BMZ}; collectively, these are the first successful multi-dimensional Evans-function computations
for viscous shock waves.

\subsection{Plan}\label{ssec:plan}
In \S\ref{sec:prelim} we establish the setting of our analysis. Namely, we outline the rather general framework of hyperbolic--parabolic systems of conservation laws to which our flux formulations apply. In \S\ref{sec:evans} we describe the balanced flux formulation and its modification. For the convenience of the reader, we describe the full development in both the important special case $d=1$ and in the general case $d>1$. This slight repetition allows us to highlight the connection between the balanced flux form and the oft-used integrated  coordinates in one dimension. We establish the main result of the paper---a proof detailing the low-frequency behavior of the flux forms---in \S\ref{sec:lowfreq}. 
Finally, in conclusion, we discuss some practical consequences in \S\ref{sec:practical}.
Appendix \ref{sec:b21n0} outlines the generalization of integrated coordinates to the case that the lower-left-hand block of the viscosity matrices does not vanish; see Remark \ref{rem:b21}. 

\section{Preliminaries}\label{sec:prelim}
\subsection{Conservation laws with viscosity}\label{ssec:claws}
A number of physical systems take the form of conservation laws with partially parabolic or ``real'' viscosity. That is, they are partial differential equations of 
 block hyperbolic--parabolic type with form
$$
f^0(U)_t+\sum_{j=1}^d f^j(U)_{x_j}=\sum_{j,k=1}^d (B^{jk}(U)U_{x_k})_{x_j}\,.
$$
Here, $x=(x_1,\ldots,x_d)\in\RR^d$, $t\in\RR$, and $U\in\RR^n$ with
\[
f^j:\RR^n\to\RR^n\,,\;j=0,1,\ldots,d\,.
\]
The $d^2$ viscosity matrices $B^{jk}\in\RR^{n\times n}$ are each assumed to have the block structure
\beq\label{eq:bblock}
B^{jk}(U)=
\bpm
0 & 0 \\ 0 & b^{jk}(U)
\epm\,.
\eeq
The blocks in \eqref{eq:bblock} have  sizes
\beq\label{eq:bs}
\left(
\begin{array}{c|c}
r\times r & r\times (n-r) \\  \hline
(n-r)\times r & (n-r)\times(n-r)
\end{array}\right)\,,
\eeq
and we write $U$ as 
\beq\label{eq:ublock}
U=\bpm u_1 \\ u_2\epm\,,\quad u_1\in\RR^r\,,u_2\in\RR^{(n-r)}\,,
\eeq
to respect this block structure. 
We write $A^j(U):=\dif f^j(U)$ for $j=0,1,\ldots,d$, and, when necessary, we write any $n\times n$ matrix $M$ in block form
\beq
M=\bpm
M_{11} & M_{12} \\
M_{21} & M_{22}
\epm\,,
\eeq
with block sizes as in \eqref{eq:bs}. We also write $f^j_\ell$ with $\ell=1$ (or $2$) to denote the first $r$ (or the last $n-r$) component functions of the flux $f^j$.

Our interest is in the stability of planar viscous shock profiles. 
Thus, we consider traveling-wave solutions of the form
\beq
U(x,t)=\bar U(x_1-st)\,,\; \lim_{z\to\pm\infty} \bar U(z)=U_\pm \,,
\eeq
and, given our interest in the stability of these waves, 
our first step is to transform to moving coordinates
$\tilde x_1=x_1-st$ in which $\bar U$ becomes stationary. 
This gives (dropping tildes) the modified system of equations
\beq\label{eq:claw}
f^0(U)_t + \big(f^1(U)-sf^0(U)\big)_{x_1}
+\sum_{j=2}^d f^j(U)_{x_j}=\sum_{j,k=1}^d (B^{jk}(U)U_{x_k})_{x_j}\,.
\eeq
We make the structural assumptions
\beq\label{eq:a11inv}
\det (A^1_{11}(\bar U) -s A^0_{11}(\bar U)) \neq 0\, 
\; \text{(hyperbolic noncharacteristicity)}
\tag{H1}
\eeq
and
\beq\label{eq:bparab}
\sigma\left(\sum \eta_j\eta_k b^{jk}(\bar U)\right)
\ge \theta |\eta|^2, \,
\theta>0,\; \text{for all} \; \eta=(\eta_1,\ldots,\eta_d)\in \RR^d
\; \text{(parabolicity).}
\tag{H2}
\eeq

\begin{remark}\label{rem:b21}
Our structural conditions apply in complete generality to the principal equations
of continuum mechanics: compressible gas dynamics, MHD, and viscoelasticity.
The methods described here can be extended to the case that the viscosity matrices have nonzero lower left-hand blocks, i.e.,
\[B^{jk}(U)=
\bpm
0 & 0 \\
b_{21}^{jk}(U) & b_{22}^{jk}(U) 
\epm\,
,
\]
under \eqref{eq:bparab} and the modified hyperbolic condition
\beq\label{eq:a11invalt}
\det (A^1_{11}-A^1_{12}(b_{22}^{11})^{-1}b_{21}^{11} -s A^0_{11}) (\bar U) \neq 0\, ,
\tag{$\text{H1}^\prime$}
\eeq
introducing an ``approximate parabolic 
coordinate'' $\check u_2= u_2 + b_{22}^{-1}b_{21}u_2$ 
similarly as in \cite{MaZ1,MaZ2}.
This is essential, for example, 
if there does not exist a true parabolic variable,
i.e., $b_{12}\partial_{u_1} + b_{22}\partial_{u_2}$ is not a matrix multiple
of $\nabla_U \tilde u_2(U)$, $\tilde u_2\in \RR^{n-r}$ for some 
``exact parabolic coordinate'' $\tilde u_2$.
It can be useful also if it is more convenient to work with a coordinate other
than the true parabolic one.
However, in practice we find it more convenient to work with the actual 
parabolic variable, and so, to simplify the presentation, we will 
restrict our attention to the main case \eqref{eq:bblock}, \eqref{eq:a11inv},
\eqref{eq:bparab}.
We briefly treat the more general case (in one space dimension) in Appendix \ref{sec:b21n0}.
\end{remark}

\subsection{Standing-wave profiles \& the eigenvalue problem}\label{ssec:swp}
\subsubsection{Profile solutions}\label{sssec:profiles}
Examining \eqref{eq:claw}, we see that the standing wave 
$\bar U$ must satisfy the ordinary differential equation ($'=\dif/\dif x_1$)
\beq\label{eq:standing}
\tilde f^1(\bar U)'=
(B^{11}(\bar U)\bar U')'\,,
\qquad
\tilde f^1(\bar U):= f^1(\bar U)-sf^0(\bar U)\,.
\eeq
Evidently, equation \eqref{eq:standing} can be integrated once to 
\beq\label{eq:twode}
B^{11}U'=\tilde f^1(\bar U)-\tilde f^1(U_\sm)\,.
\eeq
Note that, using block structure, we may rewrite \eqref{eq:twode} as 
\begin{subequations}\label{eq:tw}
\begin{align}
0 & = \tilde f^1_1(\bar U)-\tilde f^1_1(U_\sm)\,, \label{eq:alg}
 \\
b^{11}(\bar U) \bar u_2' &=\tilde f^1_2(\bar U)-\tilde f^1_2(U_\sm)\,.\label{eq:dif}
\end{align}
\end{subequations}
We expect that the algebraic equation \eqref{eq:alg} defines a submanifold of $\RR^n$ on which \eqref{eq:dif} defines a flow. To solve for $u_1$ in terms of $u_2$, locally at least, the implicit function theorem requires that 
$\det  \tilde A^1_{11}(\bar U) \neq 0$, or, equivalently, 
\eqref{eq:a11inv}.
This motivates the introduction of assumption \eqref{eq:a11inv}.

An obvious necessary condition for the existence of a traveling-wave connection is that the end states $U_\pm$ must be equilibria of \eqref{eq:twode}. Therefore, from \eqref{eq:tw}, we obtain immediately the Rankine--Hugoniot condition
\beq\label{eq:rh}\tag{RH}
\tilde f^1(U_\sp)-\tilde f^1(U_\sm)=0\,.
\eeq

\br[Hyperbolic classification]\label{rem:lax}
We denote by $i_\sp$ the number of characteristics incoming to the shock from the right and by $i_\sm$ the number  of characteristics incoming from the left. We write $i:=i_\sp+i_\sm$ for the total number of incoming characteristics.
Then, the hyperbolic classification of $\bar U(\cdot)$, i.e., the classification of the associated hyperbolic shock $(U_\sm, U_\sp)$, is given in the table below. 
\begin{center}
\begin{tabular}{c|l}
Shock Type & $i$ \\ \hline 
Lax  & $i=n+1$ \\
Undercompressive (u.c.) & $i\leq n$ \\
Overcompressive (o.c.) &  $i \geq n+2$
\end{tabular}
\end{center}
\er
\subsubsection{Linearization, eigenvalue problem}
Supposing, as above, that $U(x,t)=\bar U(x_1)$ 
is a steady solution of \eqref{eq:claw}, we
linearize about $\bar U$ to obtain an equation describing the approximate 
evolution of a perturbation also called $U=U(x,t)$. The linearized equations for $U$ read
\beq\label{eq:meval}
\bar A^0U_t+\sum_{j=1}^d(\bar A^j U)_{x_j}=\sum_{j,k=1}^d(\bar B^{jk} U_{x_k})_{x_j}\,,
\eeq
where
\[
\bar A^1U:=A^1(\bar U)U-sA^0(\bar U)-\dif B^{11}(\bar U)(U,\bar U_{x_1})
\]
and
\[
\bar A^0:=A^0(\bar U)\,,\quad \bar A^jU:=A^j(\bar U)U-\dif B^{j1}(\bar U)(U,\bar U_{x_1})\,,(j \neq 1)\,,\quad \bar B^{jk}:=B^{jk}(\bar U)\,.
\]
Taking the Laplace transform in time (dual variable $\lambda$) and Fourier transform (dual variable $\xi=(\xi_2,\ldots,\xi_d)$) in the transverse spatial directions $(x_2,\dots,x_d)$, finally, we obtain
the generalized eigenvalue equation 
\begin{multline}\label{eq:mgeval}
\lambda \bar A^0U+
(\bar A^1 U)' + \sum_{j=2}^d\mi\xi_j \bar A^j U
= (\bar B^{11} U')' + \sum_{k= 2}^d (\mi\xi_k \bar B^{1k} U)' \\
+\sum_{j=2}^d\mi\xi_j\bar B^{j1}U'
-
\sum_{j, k=2}^d\xi_j\xi_k \bar B^{jk} U\,.
\end{multline}
In \eqref{eq:mgeval} we have now used $U=U(x_1,\lam, \xi)$ to represent the transformed perturbation. Now, a basic criterion for stability of the viscous profile $\bar U$ is that the eigenvalue equation \eqref{eq:mgeval} should have no solutions which decay at $x_1=\pm\infty$ with $(\lambda,\xi)\in \{\Re\lam>0\}\times\RR^{d-1}$. Searching for such values of $(\lambda,\xi)$ is the \emph{spectral stability problem}, and the Evans function $D=D(\lam,\xi)$ vanishes precisely at such values. 
Our focus, then, is on locating zeros (if any) of $D$. 
It is clearly advantageous to design an Evans function with as much structure as possible to aid the search for unstable zeros. 
For example, analyticity is valuable; it allows the search for zeros to
proceed using the argument principle.

\subsubsection{First-order systems}
One may visualize the the construction of the Evans as follows.
The basic set-up is based on reformulating the eigenvalue problem \eqref{eq:mgeval} as a first-order system of differential equations
\beq\label{eq:1order}
W'=\mat{A}(x_1;\lam,\xi)W\,.
\eeq
We note that our block structure hypotheses imply that $\mat{A}$ is an $N\times N$ matrix with $N=(2n-r)$. If $\bar U$ decays rapidly to its limiting values $U_\spm$ as $x_1\to\pm\infty$, then the coefficient matrix $\mat{A}$ should also have constant (with respect to $x_1$) limiting values. We denote these by $\mat{A}^\spm(\lam,\xi)$. 

Then, the Evans function is built out of the subspaces of solutions of \eqref{eq:1order} which grow at $-\infty$ and decay at $+\infty$; the construction of these subspaces starts with an analysis of the constant-coefficient limiting system $W'=\mat{A}^\spm(\lam,\xi)W$.
That is, if the collection
$\{W_1^\sp,\ldots,W_k^\sp\}$ forms a basis for the solutions of \eqref{eq:1order} that decay at $+\infty$ and, similarly, $\{W_{k+1}^\sm,\ldots W_{N}^\sm\}$ spans the solutions that grow at $-\infty$, the Evans function can be written as 
\beq\label{eq:abstractevans}
D(\lam, \xi):=\det(W_1^\sp,\ldots,W_k^\sp,W_{k+1}^\sm,\ldots W_{N}^\sm)|_{x_1=0}\,.
\eeq
Evidently, if $D(\lam_\circ, \xi_\circ)=0$, then \eqref{eq:abstractevans} shows that there is a linear dependence between these two subspaces. But then there must be a solution which decays at both $\pm\infty$, an eigenfunction. 

\br
Clearly, different choices of bases lead to distinct Evans functions, and the Evans function is highly non unique. However, the construction guarantees that each representative chosen from the family of Evans functions has the fundamental property that it vanishes at eigenvalues of \eqref{eq:mgeval}. Indeed, in a companion paper, we discuss how differing coordinate systems at the level of original partial differential equation \eqref{eq:claw} influence the character of the resulting Evans function(s) \cite{BHLZ}. A related issue is the previously mentioned use of integrated coordinates and the ability to manipulate the character of the Evans function through the formulation of the first-order system \eqref{eq:1order}. For example, there are several choices of the phase variable $W$. Even though all of these Evans functions carry the same stability information, different versions may be more amenable to analysis or computation in various regimes/settings. For example, when counting zeros by the argument principle, it may be useful to limit excessive winding and unwinding. 
\er

\section{Formulating the Evans function}\label{sec:evans}

\subsection{Flux variables and integrated coordinates ($d=1$)}\label{ssec:flux1d}
For its independent interest and to showcase the relationship between the flux and balanced flux variables we introduce below and the integrated coordinates that are commonly used in one-dimensional Evans-function calculations, in this subsection we specialize to a single space dimension ($d=1$). 
We recall that, when formulated in terms of integrated coordinates, the Evans function has the useful property that, for Lax or overcompressive shocks, it does not vanish at the origin. 
Among other benefits, this feature is useful for the practical computation of the Evans function. While integrated coordinates do not naturally generalize to the multidimensional setting, the flux and balanced flux forms do; we describe this generalization to the case $d> 1$ below in Section \ref{s:multid}.

\subsubsection{Integrated coordinates}\label{sec:integrate}
In the case $d=1$, the linearized equation \eqref{eq:mgeval} collapses ($\xi=0$), and 
we may write the associated eigenvalue equation as 
\beq\label{eq:eval1d}
\lambda \bar A^0U+(\bar A^1 U)'=(\bar B^{11} U')'\,.
\eeq
To obtain the integrated Evans function, we define 
\be\label{w}
w:=\bar A^0U, \quad W'=w,
\ee
and we find, integrating \eqref{eq:eval1d},
\be\label{Weq}
\lambda W +\bar A^1 (\bar A^0)^{-1} W'
=\bar B^{11}( (\bar A^0)^{-1} W')'\,.
\ee
To write \eqref{Weq} as a first-order system, we set 
$$
Z:=\bp  W\\ (0,I_{n-r}) (\bar A^0)^{-1}W'\ep\,.
$$
We thus have $Z'=\mat{A}_\mathrm{int}(x;\lam)Z$, where (denoting by $\bfa$ the inverse of the matrix $\bar A^1_{11}$, recall \eqref{eq:a11inv})
\beq\label{eq:Aint}
\mat{A}_\mathrm{int}(x;\lam)=
\bp
-\lambda \bar A^0_{11}\bfa &  0 &  \bar A^0_{12}-\bar A^0_{11}\bfa \bar A^1_{12} \\
-\lambda \bar A^0_{21}\bfa & 0  & \bar A^0_{22}-\bar A^0_{21}\bfa \bar A^1_{12} \\
-\lambda (\bar b^{11})^{-1}\bar A^1_{21}\bfa & \lambda(\bar b^{11})^{-1} &
(\bar b^{11})^{-1}(\bar A^1_{22}-\bar A^1_{21}\bfa\bar A^1_{12})
\ep
\eeq
 is obtained by
solving for $(I,0)(\bar A^0)^{-1}W'$, 
whence, together with the coordinate 
$(0,I)(\bar A^0)^{-1}W'$, we obtain 
$(\bar A^0)^{-1}W'$ and thus $W'$.
For this step, multiply \eqref{Weq} by $(I,0)$ to obtain
$$
\lambda W_1 = - (\bar A^1_{11},\bar A^1_{12})(\bar A^0)^{-1}W'\,,
$$
from which we see that, provided $\bar A^1_{11}$ is invertible
(same assumption needed for flux variables, and indeed even for framing via implicit function theorem of
the profile equation; see discussion below \eqref{eq:tw}), we can solve for 
$(I,0)(\bar A^0)^{-1}W'$ in terms of the known coordinates
$W$ and $(0,I)(\bar A^0)^{-1}W'$ of $Z$.
We shall not carry out this computation in detail, as we shall
reproduce it by an equivalent and somewhat simpler derivation below.

\subsubsection{Flux variables}

We now describe an alternative way to write the eigenvalue equation as a first-order system. 
To write the eigenvalue equation in flux variables, we observe that \eqref{eq:eval1d} can be rewritten as 
\beq
\lambda \bar A^0 U=(\bar B^{11} U'-\bar A^1U)'\,,
\eeq
which motivates the definition of the \emph{flux variable}
\beq\label{eq:f}
\f:=\bar B^{11} U'-\bar A^1U\,,
\eeq
or
\begin{subequations}
\begin{align}
\f_1&=-\bar A^1_{11}u_1-\bar A^1_{12}u_2\,, \label{eq:f1}\\
\f_2&=\bar b^{11}u_2'-\bar A^1_{21}u_1-\bar A^1_{22}u_2 \label{eq:f2}\,,
\end{align}
\end{subequations}
with
\beq
\f'=\lambda \bar A^0 U.
\eeq
Provided that the $r\times r$ matrix $\bar A^1_{11}$ is invertible---as 
assumed in \eqref{eq:a11inv},
we see immediately from \eqref{eq:f1} that
\be\label{eq:solve}
u_1=-(\bar A_{11}^1)^{-1}\big(\f_1+\bar A^1_{12}u_2\big)\,.
\ee
Thus, using \eqref{eq:solve}, we may write \eqref{eq:eval} as a first-order system in flux variables as 
\be\label{eq:fode}
W'=\mat{A}_\f(x_1;\lambda)W\,,
\eeq
with
\beq
W=\bpm
\f \\ u_2
\epm\,,
\eeq
and the coefficient matrix $\mat{A}_\f$  given by 
%
\be\label{eq:fluxA}
\mathbf{A}_\f(x_1;\lam)=
\bp
-\lambda \bar A^0_{11}\bfa & 0 & \lambda(\bar A^0_{12}-\bar A^0_{11}\bfa \bar A^1_{12}) \\
-\lambda \bar A^0_{21}\bfa & 0 & \lambda(\bar A^0_{22}-\bar A^0_{21}\bfa\bar A^1_{12}) \\
-(\bar b^{11})^{-1}\bar A^1_{21}\bfa &
(\bar b^{11})^{-1}&
(\bar b^{11})^{-1}( \bar A^1_{22} - \bar A^1_{21}\bfa \bar A^1_{12})
\ep.
\ee

\begin{remark}
Perhaps the quickest route to the form of $\mat{A}_\f$ comes from multiplying \eqref{eq:f} from the left by the matrix 
\beq\label{eq:schur}
\bpm
(\bar A^1_{11})^{-1} & 0 \\
-\bar A^1_{21}(\bar A_{11}^1)^{-1} & I_{n-r}
\epm\,,
\eeq
from which we immediately obtain 
\beq\label{eq:simple}
\bpm
0 & 0 \\
0 & \bar b^{11}
\epm 
\bpm
u_1' \\ u_2'
\epm
-\bpm
I & (\bar A^1_{11})^{-1}\bar A_{12}^1 \\
0 & \bar A^1_{22}-\bar A_{21}^1(\bar A_{11}^1)^{-1}\bar A_{12}^1 
\epm
\bpm u_1 \\ u_2 \epm
=
\bpm
(\bar A^1_{11})^{-1}\f_1 \\
-\bar A_{21}^1(\bar A_{11}^1)^{-1}\f_1+\f_2
\epm\,.
\eeq
Observe that the first row of \eqref{eq:simple} gives \eqref{eq:solve}, and the row operation in \eqref{eq:schur} has eliminated $u_1$ from the second row. 
\end{remark}

\subsubsection{Balanced flux variables}\label{sssec:bf}
Introducing the balanced flux variable
\beq\label{eq:bflux}
\f^\sharp:= \lambda^{-1}(\bar B^{11} U'-\bar A^1U)=\frac{\f}{\lambda}\,,
\eeq
and
\beq
 W^\sharp=\bpm
\f^\sharp \\ u_2
\epm\,,
\eeq
effects a scaling transformation on \eqref{eq:fode}; we find in this case that the eigenvalue ODE can be written as
\be\label{eq:bfode}
(W^\sharp)'=\mat{A}_{b\f}(x_1;\lambda) W^\sharp\,,
\eeq
with
\beq\label{eq:bf}
\mat{A}_{b\f}(x_1;\lam)=
\bp
-\lambda \bar A^0_{11}\bfa & 0 & \bar A^0_{12}-\bar A^0_{11}\bfa \bar A^1_{12} \\
-\lambda \bar A^0_{21}\bfa & 0 & \bar A^0_{22}-\bar A^0_{21}\bfa\bar A^1_{12} \\
-\lambda (\bar b^{11})^{-1}\bar A^1_{21}\bfa &
\lambda(\bar b^{11})^{-1} &
(\bar b^{11})^{-1}(\bar A^1_{22}-\bar A^1_{21}\bfa \bar A^1_{12})
\ep
\eeq
identical to $ \mat{A}_\mathrm{int}$.
That is, \emph{the balanced flux and integrated formulations exactly agree.}

\medskip

\br
Noting that the balanced flux variable $\f^\sharp$ satisfies
\beq\label{eq:equivphase}
(\f^\sharp)'= \lambda^{-1}(\bar B U'-\bar A^1U)'
=\bar A^0 U=w,
\eeq
we see that $(\f^\sharp)'$ and $W'$ agree, so that the ODEs must be equivalent.
Moreover, keeping in mind the relation \eqref{eq:equivphase},
one may check directly that the two described derivations coincide.

\er
\subsection{Flux, balanced flux, and modified balanced flux variables $(d>1)$} \label{s:multid}
\subsubsection{Flux Variables $(d>1)$}
We now proceed to describe the flux formulation for the multidimensional system \eqref{eq:claw}. The starting point is the eigenvalue equation \eqref{eq:mgeval}.
First, we note that 
\[
\left(\sum_{j=2}^d\mi\xi_j\bar B^{j1}U\right)'=\sum_{j=2}^d\mi\xi_j(\bar B^{j1})'U+\sum_{j=2}^d\mi\xi_j\bar B^{j1}U'\,,
\]
so that we may rearrange \eqref{eq:mgeval} to 
\begin{equation}\label{eq:mgeval2}
\lambda \bar A^0U+ \sum_{j=2}^d\mi\xi_j \tilde A^j U
+
\sum_{j, k=2}^d\xi_j\xi_k \bar B^{jk} U
\\
= \left(\bar B^{11} U' + \sum_{j= 2}^d \mi\xi_j \bar B^{j} U-\bar A^1U\right)'\,,
\end{equation}
where 
$\tilde A^j:=\bar A^j+(\bar B^{j1})'$
and
$\bar B^j:=\bar B^{j1}+\bar B^{1j}$.
Thus, we may define the \emph{flux variable} $\f$ by 
\beq\label{eq:mf}
\f:=\bar B^{11} U'+ \sum_{j=2}^d \mi\xi_j \bar B^{j} U-\bar A^1U\,, 
\eeq
and the goal is to recast \eqref{eq:mgeval2} as a first-order system.
$
W'=\mathbf{A}_\f(x_1;\lambda,\xi)W
$
with 
\[
W=\bpm
\f \\
u_2
\epm\,.
\]
We write $\bar B^{\xi}:=\sum_{j\neq 1}\xi_j\bar B^j$ (and similarly, $\tilde A^{\xi}:=\sum_{j\neq1}\xi_j\tilde A^j$, $\bar B^{\xi\xi}$, and so on), and we note that our block structure assumption implies that $\bar B^{\xi}$ has the form
\[
\bar{B}^{\xi}=
\bpm
0 & 0 \\ 
0 & \bar b^{\xi}
\epm\,.
\]
Thus, we may perform a simplifying row operation on \eqref{eq:mf}; we multiply on the left by 
\beq\label{eq:schur2}
\bpm
(\bar A^1_{11})^{-1} & 0 \\
-\bar A^1_{21}(\bar A_{11}^1)^{-1} & I
\epm\,,
\eeq
Equation \eqref{eq:mf} then becomes
\begin{multline}\label{eq:simple2}
\bpm
0 & 0 \\
0 & \bar b^{11}
\epm 
\bpm
u_1' \\ u_2'
\epm
+
\bpm
0 & 0 \\
0 &\mi\bar{b}^{\xi}
\epm
\bpm 
u_1 \\
u_2
\epm
-\bpm
I & (\bar A^1_{11})^{-1}\bar A_{12}^1 \\
0 & \bar A^1_{22}-\bar A_{21}^1(\bar A_{11}^1)^{-1}\bar A_{12}^1 
\epm
\bpm u_1 \\ u_2 \epm
\\
=
\bpm
(\bar A^1_{11})^{-1}\f_1 \\
-\bar A_{21}^1(\bar A_{11}^1)^{-1}\f_1+\f_2
\epm\,.
\end{multline}
Evidently, the third row of $\mathbf{A}_\f$ can be read off from \eqref{eq:simple2}. In addition, \eqref{eq:simple2} contains the fundamental identity
\beq\label{eq:solve2}
-u_1-(\bar A^1_{11})^{-1}\bar A^1_{12}u_2=(\bar A^1_{11})^{-1}\f_1
\eeq
which allows us to eliminate $u_1$ in favor of $u_2$ and $\f_1$.
To obtain the first two rows of $\mathbf{A}_\f$, we write
\beq
\f'=
\lambda \bar A^0U+ \mi\tilde A^{\xi} U
+ \bar B^{\xi\xi} U
\eeq
in terms of components, so that 
\begin{subequations}\label{eq:mdf}
\begin{align}
\f_1'&=\lambda\bar A_{11}^0u_1+\lambda \bar A^0_{12}u_2+\mi\tilde A^{\xi}_{11}u_1+\mi\tilde A^{\xi}_{12}u_2\,,  \\
\f_2'&=\lambda\bar A_{21}^0u_1+\lambda \bar A^0_{22}u_2+\mi\tilde A^{\xi}_{21}u_1+\mi\tilde A^{\xi}_{22}u_2+\bar b^{\xi\xi}u_2\,.
\end{align}
\end{subequations}
Thus, we may use \eqref{eq:solve2} to eliminate $u_1$ from \eqref{eq:mdf} and obtain $\mathbf{A}_\f$. Thus, continuing to denote $(A_{11}^1)^{-1}$ by $\mathbf{a}$, we find that
\beq
\mathbf{A}_\f 
=
\bpm
-\lambda\bar A^0_{11}\mathbf{a}-\mi\tilde A_{11}^{\xi}\mathbf{a} 
& 
0 
& -\lambda(\bar A^0_{12}\mathbf{a}\bar A^1_{12}-\bar A^0_{12})-\mi\tilde A_{11}^{\xi}\mathbf{a}\bar A^1_{12}+\mi\tilde A_{12}^{\xi}
\\
-\lambda\bar A^0_{21}\mathbf{a}-\mi\bar A_{21}^{\xi}\mathbf{a} 
& 
0 
& -\lambda(\bar A^0_{21}\mathbf{a}\bar A^1_{12}-\bar A^0_{22})-\mi\tilde A_{21}^{\xi}\mathbf{a}\bar A^1_{12}+\mi\tilde A_{22}^{\xi}+\bar b^{\xi\xi}
\\
-(\bar b^{11})^{-1}(\bar A^1_{21}\mathbf{a})
&
(\bar b^{11})^{-1}
& 
(\bar b^{11})^{-1}(\bar A^1_{22}-\bar A^1_{21}\mathbf{a}\bar A^1_{12}-\bar b^{\xi})
\epm.
\eeq

\begin{remark}
This may readily be seen to be equivalent to the usual Evans function
of \cite[Eq. (3.1), p. 356]{Z1}, based on variable $(u, b_{11}u_1'+b_{22}u_2')$, since the phase variables for the two Evans functions
are conjugate by a frequency-independent coordinate transformation.
\end{remark}

\subsubsection{Balanced flux variables $(d>1)$}\label{ssec:balance}
Proceeding from the flux form, we may, similarly as in \S\ref{sssec:bf} above, define a multi-dimensional
Evans function analogous to the integrated Evans function used in
one dimension.
We define 
\beq
\f^\sharp:=\f/r(\lambda,\xi),\ \lambda^{\sharp} = \lambda/r(\lambda,\xi),\ \mathrm{and}\ \xi^{\sharp} = \xi/r(\lambda,\xi)
\eeq
with
\beq\label{rbf}
r(\lambda,\xi):=|\lambda,\xi|,
\eeq
to obtain
an alternative Evans function.
In this case, the first-order system takes the form 
$
\tilde W'=\mathbf{A}^\sharp(x_1;\lambda,\xi)\tilde W
$
with $\ti W=(\f^\sharp,u_2)^\tr$ and 
\beq\label{eq:multidBf}
\mathbf{A}^\sharp
=
\bpm
r(-\lambda^\sharp\bar A^0_{11}\mathbf{a}-\mi\tilde A_{11}^{\xi}\mathbf{a}) 
& 
0 
& -\lambda^\sharp(\bar A^0_{12}\mathbf{a}\bar A^1_{12}-\bar A^0_{12})-\mi\tilde A_{11}^{\xi^\sharp}\mathbf{a}\bar A^1_{12}+\mi\tilde A_{12}^{\xi^\sharp}
\\
r(-\lambda^\sharp\bar A^0_{21}\mathbf{a}-\mi\bar A_{21}^{\xi^\sharp}\mathbf{a}) 
& 
0 
& -\lambda^\sharp(\bar A^0_{21}\mathbf{a}\bar A^1_{12}-\bar A^0_{22})-\mi\tilde A_{21}^{\xi^\sharp}\mathbf{a}\bar A^1_{12}+\mi\tilde A_{22}^{\xi^\sharp}+r \bar b^{\xi^\sharp\xi^\sharp}
\\
-r(\bar b^{11})^{-1}(\bar A^1_{21}\mathbf{a})
&
r(\bar b^{11})^{-1}
& 
(\bar b^{11})^{-1}(\bar A^1_{22}-\bar A^1_{21}\mathbf{a}\bar A^1_{12}-r\bar b^{\xi^\sharp})
\epm
.
\eeq
This determines an Evans function 
$D^\sharp(r,\xi^\sharp, \lambda^\sharp)$ in the
variables $(r,\xi^\sharp,\lambda^\sharp)$, from which we may then extract
an Evans function 
\beq\label{Dbf}
D_{b\f}(\lambda,\xi):=
D^\sharp(r(\lambda,\xi), \xi/r(\lambda,\xi), \lambda/r(\lambda,\xi)).
\eeq

\subsubsection{Modified balanced flux variables $(d>1)$}\label{ssec:mbalance}

Alternatively, we may replace $r$ in \eqref{rbf} with
\beq\label{rmbf}
r_2(\lambda,\xi):=|\xi| + \lambda
\eeq
in the above derivation,
to obtain
an Evans function 
\beq\label{Dmbf}
D_{mb\f}(\lambda,\xi):=
D^\sharp(r_2(\lambda,\xi), \xi/r_2(\lambda,\xi), \lambda/r_2(\lambda,\xi)).
\eeq
that is analytic in $\lambda$, reducing to
the usual (1D) integrated Evans function for $\xi=0$,
and still has the desirable property that it is nonvanishing
at the origin (where it is now multi-valued, depending on limiting
angle).
This is perhaps the truest generalization of the integrated 
Evans function to multi-dimensions, considered $\xi$-slice
by $\xi$-slice.
However, it loses some uniformity in replacing $|\lambda,\xi|$
by the norm-equivalent (for $\Re \lambda \ge 0$) 
quantity $|\xi|+\lambda$.

\section{Low-frequency behavior of the balanced flux forms}\label{sec:lowfreq}
The Evans function $D_{b\f}$
 in the phase variables
$
\tilde W=\bp \f^\sharp\\ u_2\ep
$ is approximately homogeneous near $(0,0)$
and analytic along rays through
the origin (equivalently, when written in polar coordinates).
As noted above, in one dimension, $(\f^\sharp)'=U$, and so
the Evans function $\tilde D(\lam,\xi)$ determined by
phase variables 
$
\bp \f^\sharp\\u_2\ep= \bp \f^\sharp\\(\f^\sharp_2)'\ep
$
is exactly the usual integrated Evans function, with the desirable
property that, for Lax or overcompressive shocks, it does not vanish
at the origin.
We now show that this desirable property persists also for
multi-dimensions.
Recall \cite{M,ZS} that inviscid stability of multidimensional shock waves is determined by
a {\it Lopatinski determinant} $\Delta( \lambda, \xi) $ analogous to the Evans function,
defined on $\xi\in \R$, $\Re \lambda\geq 0$; a shock is {\it uniformly inviscid stable} 
if $\Delta \neq 0$ on $\{\Re\lambda \geq 0\}\setminus \{(0,0)\}$.

\begin{proposition}
\label{lem:zs}
With appropriately chosen bases at $x_1=\pm \infty$,
\be\label{asymptotics}
D_{b\f}(\lambda, \xi)= \gamma \Delta(\check \lambda, \check \xi) 
+o(r),
\ee
 for $r:=|\lambda, \xi|$ sufficiently small,
where 
$(\check \lambda, \check \xi):=r^{-1}(\lambda, \xi)$,
 $\Delta(\lambda, \xi)$ is the inviscid Lopatinski determinant,
and $\gamma$ is a transversality coefficient for the traveling-wave ODE
that is a constant independent of angle $(\check \lambda, \check \xi)$.
In particular, for a uniformly inviscid stable shock, $D_{b\f}(\lambda,\xi)$ has a nonvanishing
limit $\Delta(\check \lambda, \check \xi)$ as $(\lambda,\xi)\to 0$ with $(\check \lambda, \check \xi)$ held fixed.
\end{proposition}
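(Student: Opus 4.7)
The plan is to combine the block-triangular structure of $\mathbf{A}^\sharp$ at $r=0$ with joint analyticity in polar coordinates. First I would rewrite the coefficient matrix \eqref{eq:multidBf} in terms of $(r,\check\lambda,\check\xi)$ with $r=|(\lambda,\xi)|$ and observe that it extends analytically in $r$ through $r=0$, uniformly for $(\check\lambda,\check\xi)$ on the closed unit half-sphere $\{\Re\check\lambda\geq 0\}$. By standard perturbation theory for the associated spectral projectors at $x_1=\pm\infty$, the relevant stable/unstable subspaces extend continuously up to $r=0$, so \eqref{asymptotics} reduces to an explicit evaluation of $D_{b\f}$ at $r=0$ together with a perturbative remainder of the asserted size.

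Next, I would analyze $\mathbf{A}^\sharp|_{r=0}$ itself. Direct inspection of \eqref{eq:multidBf} shows that every entry in the two block-columns indexed by $\f^\sharp$ carries a factor of $r$ and therefore vanishes at $r=0$, while the third block-column remains nontrivial. Thus the $r=0$ system decouples: the $u_2$ component satisfies the closed ODE $u_2'=(\bar b^{11})^{-1}[\bar A^1_{22}-\bar A^1_{21}\bfa\bar A^1_{12}]\,u_2$, which is exactly the variational equation obtained by differentiating the standing-wave equation \eqref{eq:dif}, whereas $\f^\sharp$ is driven by $u_2$ through coefficients depending affinely on $(\check\lambda,\check\xi)$ and vanishing with $u_2$. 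In particular, $u_2=\bar u_2'$ is always a solution and, under \eqref{eq:a11inv}, decays at both $x_1=\pm\infty$; this is the translational zero mode.

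The heart of the argument is the choice of bases at $x_1=\pm\infty$ that exposes the factorization. On each side, the relevant subspace of $\mathbf{A}^{\sharp,\pm}|_{r=0}$ splits into two families: (i) \emph{viscous} modes in which $u_2$ grows or decays along the stable/unstable eigenspace of the matrix $M_{33}^\pm$ obtained by evaluating $(\bar b^{11})^{-1}[\bar A^1_{22}-\bar A^1_{21}\bfa\bar A^1_{12}]$ at $U=U_\pm$, including the distinguished translational mode $(0,\bar u_2')$ that appears on both sides; and (ii) \emph{hyperbolic} modes in which $u_2\equiv 0$ and $\f^\sharp$ is constant, chosen as the $r\to 0^+$ limits of the slow eigenvectors of $\mathbf{A}^{\sharp,\pm}(r)$ associated with the characteristic modes of the inviscid operator $\check\lambda\bar A^{0}_\pm+\mi\sum_{j\geq 2}\check\xi_j\bar A^{j}_\pm$. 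With these bases, the Evans determinant evaluated at $x_1=0$ becomes block upper-triangular in the splitting $W=(\f^\sharp,u_2)$: the $u_2$-diagonal block records the transverse matching of the profile's stable and unstable manifolds at $\bar u_2'$ and produces the angle-independent constant $\gamma$; the $\f^\sharp$-diagonal block collects the constants $\f^\sharp_{\pm\infty}$ paired against the inviscid jump condition and constitutes precisely the Lopatinski matrix, whose determinant is $\Delta(\check\lambda,\check\xi)$.

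The main obstacle is part (ii): identifying the $r\to 0^+$ limits of the slow eigenvectors with the inviscid characteristic eigenvectors in such a way that the resulting $\f^\sharp$-block is literally the Lopatinski matrix, rather than merely equivalent to it up to a $(\check\lambda,\check\xi)$-dependent change of basis, which would destroy the angle-independence of $\gamma$. Following the strategy of \cite{ZS}, this matching is rendered transparent by the flux formulation itself: because $\f^\sharp$ is the rescaled conserved-flux perturbation, its limits at $\pm\infty$ evaluate directly to the natural data of the inviscid Lopatinski determinant, so the identification is essentially algebraic once a consistent analytic basis of the slow eigenspaces of $\mathbf{A}^{\sharp,\pm}(r)$ has been fixed. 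The $o(r)$ remainder then follows from the analyticity established in the first step.
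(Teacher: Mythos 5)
Your proposal has a genuine gap at its central step, the claimed block-triangular evaluation of the determinant at $r=0$, and the gap sits exactly where the actual content of the result lies. Count the modes for a Lax shock: your ``viscous'' family consists of the fast-stable directions at $+\infty$ together with the fast-unstable directions at $-\infty$, which number $(n-r)+1$ because the translational mode $(0,\bar u_2')$ is claimed to ``appear on both sides''; but as you describe them these vectors are all supported in the $u_2$-coordinate subspace, which has dimension only $n-r$. They are therefore automatically linearly dependent, and the determinant you propose to evaluate is identically zero --- equivalently, the translational zero has not been removed. Relatedly, your ``hyperbolic'' family supplies only $o=2n-i=n-1$ constant $\f^\sharp$-vectors, so the $\f^\sharp$-diagonal block is one column short of the $n\times n$ Lopatinski matrix; the missing column is the front vector $\check\lambda[f^0]+\mi\sum_{j\ge 2}\check\xi_j[f^j]$, and you never say where it comes from. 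The missing idea is the L'H\^opital step: in unbalanced flux variables the fast modes have flux component of the form $0+w_1(\check\lambda,\check\xi)\,r+O(r^2)$, so in the balanced variable $\f^\sharp=\f/r$ they limit to $w_1(\check\lambda,\check\xi)+O(r)$, \emph{not} to $0$ (indeed, even in your own $r=0$ system $\f^\sharp$ is driven by $u_2$, so the fast modes carry nontrivial, angle-dependent $\f^\sharp$-components --- your basis description drops them). For the translational mode the two-sided difference of these limits is precisely the front vector, which simultaneously restores linear independence of the fast family and supplies the last column of $\Delta$.

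Note also that the paper does not attempt the direct factorization you outline. It instead compares bases between the standard flux system and the balanced one: slow modes are rescaled by $\diag\{I,|\lambda,\xi|\}$ and fast modes by $\diag\{|\lambda,\xi|^{-1},I\}$ (the L'H\^opital step above), giving $D_{b\f}=r^{o-i}D=r^{-c}D$, after which \eqref{asymptotics} follows by quoting the Zumbrun--Serre expansion $D=r^c\,\gamma\,\Delta(\check\lambda,\check\xi)+o(r^c)$ from \cite{ZS,Z1}. Your route amounts to reproving that expansion in the new coordinates; it can be made to work, but only once the fast-mode rescaling is incorporated. Finally, your appeal to ``standard perturbation theory'' for the subspaces at $r=0$ understates the difficulty: the limiting matrices there have an $n$-fold degenerate zero eigenvalue whose splitting for $r>0$ is governed by the inviscid characteristics and fails to be analytic at glancing frequencies, which is why both the paper and \cite{ZS} work only locally in $(\check\lambda,\check\xi)$ and defer the construction of fast and slow bases to \cite{ZH,ZS,Z1}.
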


\begin{proof}
This follows from the fact that the component bases of decaying solutions for the standard (``unintegrated'') flux system, when multiplied by 
$\diag \{I, |\lambda,\xi|\}$,
yield a basis for the balanced flux system, {\it except} for ``fast'' decay
elements (corresponding to incoming inviscid modes) which vanish in the $\f$ coordinate and must
be treated instead by multiplying by $\diag \{|\lambda,\xi|^{-1}, I\}$
using L'H\^opital's rule.
But, this means the determinant will change by factor of
$$
|\lambda,\xi|^{o-i}= |\lambda,\xi|^{-c} ,
$$
 where $i$ is the number of incoming and $o$ the
number of outgoing hyperbolic modes, and $c=i-o$ is the ``degree of compressivity'' \cite{ZH,ZS},
equal for Lax or overcompressive shocks to the number of zeros at the origin.

Here, we are using the important fact that fast modes may be chosen 
analytically in $r$ and (locally) in $(\check \lambda,\check \xi)$,
and in the usual flux variables are independent of angle
$(\check \lambda,\check\xi)$, being of form 
$$
\bp 0+ w_1(\check \lambda, \check \xi) r + O(r^2)\\ v\ep.
$$
Thus when multiplied by $\diag \{|\lambda,\xi|^{-1}, I\}$,
they transform to form
$$
\bp  w_1(\check \lambda, \check \xi)  + O(r)\\ v\ep,
$$
which are legitimate basis elements that are still analytic in polar
coordinates $(r, \check\xi,\check \lambda)$.
This validates the choice of fast bases.
Likewise, we may check that the first coordinates $\f$ of slow
modes are chosen as bases for the associated Lopatinski determinant,
so that $\bp \f\\0\ep$ are still independent, and clearly independent
of fast modes, so we still have a basis.
For details on construction of ``fast'' and ``slow'' basis elements in the vicinity of the origin, see \cite{ZH,ZS,Z1}.

We thus have 
$\tilde D(\lambda, \xi)\equiv r^{-c}D(\lambda, \xi)$, 
where $D$ is the usual Evans function
as defined, e.g., in \cite{Z1}, whereupon \eqref{asymptotics} follows
from the fundamental property of the standard Evans function 
\cite{ZS,Z1}, valid for Lax and overcompressive shocks, that
$$
D(\lambda, \xi)=\Delta(\lambda, \xi)+o(r^c)=
r^c \Delta(\check\lambda,\check\xi)+o(r^c).
$$

(For undercompressive shocks
$D(\lambda, \xi)=\Delta(\lambda, \xi)+o(r^c)= r \Delta(\check\lambda,\check\xi)+o(r)$, so that the number of zeros at the origin in balanced flux coordinates
equals or exceeds the number of zeros in standard flux coordinates.)
\end{proof}

\br
The modified balanced flux formulation also removes zeros at the origin, by essentially the same argument, substituting for $|\lambda,\xi|$
the commensurate (for $\Re \lambda \geq 0$) quantity $|\xi|+\lambda$.
\er

\section{Practical Considerations}\label{sec:practical}
\subsection{Kato bases}\label{katorbases}
Proposition \ref{lem:zs} concerns the Evans function induced by a particular choice of (local) bases near $|\lambda,\xi|=0$, or, 
equivalently \cite{GZ,BrZ,HuZ,Z1}, by the choice of initializing stable/unstable eigenbases $R_\pm$ of the limiting coefficient matrices at 
$x\to \pm \infty$, where $R_\pm$ are matrices with columns corresponding to basis elements.
In standard practice, this is done using the {\it Kato ODE}
\be\label{property}
\dot R=PP'R,
\ee
where $P$ is the corresponding stable/unstable eigenprojector \cite{BrZ,HuZ,STABLAB}, and $\dot{ }$ denotes variation along a given path in frequency space $(\lambda,\xi)$.  It can be shown that this is the unique choice such that $P\dot R=0$.
When there exist locally (jointly) analytic bases and projectors $V_\pm$, $P_\pm$ with respect to $(r,\hat \xi, \hat \lambda)$, we can write 
$R_\pm =V_\pm \alpha_\pm$ and use \eqref{property} to derive a linear analytic-coefficient ODE for coefficients $\alpha_\pm$, which are therefore
locally (jointly) analytic in $(r, \hat \xi,\hat \lambda)$ as well.
The change from bases $R_\pm$ to $V_\pm$ changes the value of the resulting Evans function by a nonvanishing analytic factor, hence the conclusions
of Proposition \ref{lem:zs} remain valid for the standard Kato basis as well.
Points where analyticity in $(\hat \xi, \hat \lambda)$ fails at $r=0$ correspond (see \cite{ZS,Z1}) to {\it glancing modes} for the associated inviscid problem, in which the coefficient matrix possesses a Jordan block.  It is shown in \cite{Z1,MeZ} that, at such a point, variations in $r$ and $\lambda$ enter ``together,'' to lowest order as a linear combination in the lower lefthand corner of the standard Jordan form.  For example, a model for a glancing mode/Jordan block of order $2$ is
$$
A(r,\hat \xi,\hat \lambda)= \begin{pmatrix} 0 & 1 \\ \hat \lambda - i\tau(\hat \xi) +r & 0
\end{pmatrix},
$$
where $ \tau(\hat \xi)$ is an analytic function of $\xi$.
In this case, writing $\delta:= \hat \lambda - i\tau(\hat \xi) +r $, and making a similar computation, we
see that variations enter via a Puissieux series, through $\sqrt{\delta}$.
In particular, we find that, both in the inviscid and the viscous problem, the Evans function exhibits a square root-type
singularity in $\hat \lambda$ at $\hat \lambda=i \tau(\hat \xi)$.
This gives a useful check for multi-dimensional viscous computations; see \cite{HLyZ2}.

\subsection{Application in different frequency regimes}
By \eqref{asymptotics}, the balanced flux form of the Evans function is, with uniform stability, nonvanishing at the origin. This is useful for numerical conditioning in the delicate low-frequency regime.
To conveniently check intermediate frequencies by a robust
winding number computation, we may instead use the modified balanced flux formulation $D_{mb}$,
recovering the desirable property of analyticity 
in $\lambda$.
This reduces to the usual integrated Evans function for $\xi=0$,
and still has the desirable property that it is nonvanishing
at the origin (now multi-valued, depending on the limiting angle, as is the balanced flux version).
For an example of how this works in practice, 

%

\appendix
\section{Integrated coordinates: $b_{21}\ne 0$ ($d=1$)}\label{sec:b21n0}
In the case $d=1$, we indicate the changes incurred by dropping the condition $b^{jk}_{21}=0$ in \eqref{eq:bblock}. Suppose
\beq\label{eq:weak_b}
B^{11}(U)=
\bpm
0 & 0 \\ 
b_{21}(U) & b_{22}(U)
\epm\,.
\eeq
Linearizing about the steady solution $\bar U$, we 
obtain, as before, the linearized system
\beq \label{lin}
A^0(\bar U)U_t
+((A^1(\bar U) -sA^0(\bar U)) U)_x=(B^{11}(\bar U)U_x+\dif B^{11}(\bar U)(U,\bar U_x))_x\,,
\eeq
and we may write the associated eigenvalue equation as 
\beq\label{eq:eval}
\lambda \bar A^0U+(\bar A^1 U)'=(\bar B^{11} U')'\,.
\eeq
In \eqref{eq:eval}, we have written 
\be\label{coeffs}
\bar A^0:=A^0(\bar U)\,,
\quad \bar A^1U:=A^1(\bar U)U-sA^0(\bar U)-\dif B^{11}(\bar U)(U,\bar U_x)\,,\quad \bar B^{11}:=B^{11}(\bar U)\,.
\ee
Defining 
$w:=\bar A^0U$, $W'=w$, as in Section \ref{sec:integrate},
and integrating \eqref{eq:eval},
we have again \eqref{Weq}.
Setting now
$$
Z:=\bp  W\\ (b_{22}^{-1}b_{21},I_r) (\bar A^0)^{-1}W'\ep,
$$
we obtain by a similar, but more involved, computation to that in the case
$b_{21}=0$, $Z'=\mathbf{A}_\mathrm{int}Z$, where
\begin{equation}\label{eq:w_Aintb1}
\mathbf{A}_\mathrm{int} =
\bp
\bar A^0_{11}M_1+\bar A^0_{12}N_1 & 0 & \bar A^0_{11}M_3+\bar A^0_{12}N_3 \\
\bar A^0_{21}M_1+\bar A^0_{22}N_1 & 0 & \bar A^0_{21}M_3+\bar A^0_{22}N_3 \\
\mathbf{A}_{31} & \mathbf{A}_{32}  & \mathbf{A}_{33} 
\ep\,.
\end{equation}
Here, 
\begin{align}
\mathbf{A}_{31} & = b_2^{-1}\{-\lambda \bar A^1_{21}\mathcal{B}^{-1}-\lambda b_{22}(b_{22}^{-1}b_{21})'\mathcal{B}^{-1}+\lambda \bar A^1_{22}b_{22}^{-1}b_{21}\mathcal{B}^{-1}\}\,,\\
\mathbf{A}_{32}  & = b_{22}^{-1} \lambda \\
\mathbf{A}_{33} & = b_{22}^{-1}\{-\bar A^1_{21}\mathcal{B}^{-1}\bar A^1_{12}-b_{22}(b_{22}^{-1}b_{21})'\mathcal{B}^{-1}\bar A^1_{12}+\bar A^1_{22}+\bar A^1_{22}b_{22}^{-1}b_{21}\mathcal{B}^{-1}\bar A^1_{12}\}
\end{align}
and 
\begin{align}
M_1= -\lambda\mathcal{B}^{-1}\,,
M_3 = -\mathcal{B}^{-1}\bar A^1_{12} \,, 
N_1= \lambda b_{22}^{-1}b_{21}\mathcal{B}^{-1}\,,
N_3  = I+b_{22}^{-1}b_{21}\mathcal{B}^{-1}\bar A^1_{12}\,,
\end{align}
with
\begin{align}
\mathcal{B} & = \bar A^1_{11}-\bar A^1_{12}b_{22}^{-1}b_{21} \,.
\end{align}
Note that \eqref{eq:a11invalt} implies that $\mathcal{B}$ is invertible.
The system is closed by
solving for $(I,0)(\bar A^0)^{-1}W'$, 
whence, together with the coordinate 
$$
 (b_{22}^{-1}b_{21},I_r) (\bar A^0)^{-1}W',
$$
we obtain $(\bar A^0)^{-1}W'$ and thus $W'$.
For this step, multiply \eqref{Weq} by $(I,0)$ to obtain
$$
\lambda W_1 = - (\bar A^1_{11},\bar A^1_{12})(\bar A^0)^{-1}W'\,,
$$
from which we see that, provided the modified condition 
\eqref{eq:a11invalt} holds, 
we can solve for 
$(I,0)(\bar A^0)^{-1}W'$ 
in terms of the known coordinates
$W$ and 
$ (b_{22}^{-1}b_{21},I_r) (\bar A^0)^{-1}W'$.

\end{document}